\documentclass[11pt]{article} %To run in WinEdt 6.0 press "Shift+Ctl+P"

\usepackage{amsmath,amssymb}
\usepackage{amsthm}
\usepackage{color}
\usepackage[latin1]{inputenc}
\usepackage[T1]{fontenc}
\usepackage[english]{babel}
\usepackage{mathtools}
\usepackage{amsfonts}
\usepackage{graphicx}
\usepackage{dsfont}
\usepackage{epsfig}
\usepackage{epstopdf}
\usepackage{ae}
\usepackage{amsthm}
\usepackage{thmtools}
\usepackage{endnotes}
\usepackage{setspace}
\usepackage{appendix}
\usepackage{float}
\usepackage{subfigure}
\usepackage{multirow}
\usepackage{chngpage} %to center table on the whole width of the page
\usepackage{bbm}

\usepackage{titlesec}
\titleformat*{\section}{\large \bfseries}
\titlespacing{\section}{0pt}{\parskip}{\parskip}

\titleformat*{\subsection}{\normalsize \bfseries}
\titlespacing{\subsection}{0pt}{\parskip}{\parskip}

%%%%%%%%%%%%%%
\usepackage{setspace}

%Then switch between different spacing options with:

%\doublespacing
%\singlespacing
%\onehalfspacing

%For other sizes use the \setstretch command like this:
\setstretch{1.2}

\setlength{\parskip}{0.1cm} %to give an extra space between paragraphs

\oddsidemargin=0cm
\addtolength{\topmargin}{-.5in}
\addtolength{\textheight}{1.5in}
\setlength{\textwidth}{6.2in}

\numberwithin{equation}{section}

\newtheorem{theorem}{Theorem}[section]

\newtheorem{assumption}{Assumption}[section]

%\def\mcm{\mbox{$\mathcal{M}$}}
%\def\mcmT{\mbox{$\mathcal{M}_T$}}
%\def\mcu{\mbox{$\mathfrak{U}$}}

%Target journal: SIAM Journal on Control and Optimization
%http://www.siam.org/journals/sicon.php
%This is the original, non journal version.

%\title{\Large{Integral equation characterization of the Feynman-Kac formula of a regime-switching mean-reverting process}} %solution to a Cauchy problem}}
\title{\Large{Integral equation characterization of the Feynman-Kac formula for  a regime-switching diffusion}} %solution to a Cauchy problem}}

\author{%Saul D. Jacka\thanks{Department of Statistics, The University of Warwick (s.d.jacka@warwick.ac.uk).} \and
Adriana Ocejo\thanks{Department of Mathematics and Statistics, UNC Charlotte. University City Blvd., Charlotte, NC, 28262, USA.
e-mail: amonge2@uncc.edu, phone: +1(704)687-1413.}}
%This work was supported, in part, by funds provided by the University of North Carolina at Charlotte.}}

%\footnote{This author acknowledges the support of CONACYT under the Ph.D. scholarship \#309247.}
%\\ Department of Statistics, The University of Warwick}

%\author[*]{Saul D. Jacka}
%\author[**]{Adriana Ocejo}
%\affil[*]{Department of Statistics, The University of Warwick}
%\affil[**]{Department of Mathematics and Statistics, University of North Carolina at Charlotte}

%\date{Nov 2018}

\begin{document}

\maketitle

\begin{abstract}
In this paper, we provide an integral equation characterization of the solution to a Cauchy problem
associated to the Feynman-Kac formula for a regime-switching diffusion.
We give a sufficient condition to guarantee the uniqueness of solutions to the integral equation and
provide an example in the context of option pricing under the Ornstein-Uhlenbeck regime-switching model.
%We study the price of futures and options when the underlying asset price is a mean-reverting regime-switching process.
%The instantaneous interest rate, mean-reverting level, and volatility depend on an independent, continuous-time, finite-state Markov chain.
%Typical pricing methods require to solve an associated coupled system of partial differential equations, for which explicit solutions are usually not available.
%We propose an integral equation approach for pricing based on the Banach Fixed Point Theorem.
\end{abstract}

{\bf Key words.} Cauchy problem; Feynman-Kac; regime-switching; integral equation; contraction mapping.

%{\bf AMS subject classification (2010).} 91G20; 47H10; 60J70.
%60J70-Applications of Brownian motions and diffusion theory
%60K37-Processes in random environments
%91G20-Derivative securities
%47H10-Fixed-point theorems
%91B25-Asset pricing models

\section{Introduction}

%In this paper, we derive integral equations to compute the price of futures and options %derivatives % commodity futures, options, and options on futures,
%assuming that the parameters of the underlying spot price are allowed to switch values according to the business cycle or the economic regime.
%Empirical evidence from futures prices in commodities such as oil, metals, agricultural products, and electricity
%show that prices revert towards a mean reversion level (see e.g. \cite{Bessem_etal}, \cite{Schwartz}, electricity ref, others).
%Also, there is evidence that the spot price is affected by the economic regime (see e.g. \cite{Fama&French} and Chapter 22 in \cite{Commodities}).

In this paper, we derive an integral equation characterization of the solution to the Cauchy problem
 \begin{equation} \label{eq:Cauchy_p}
 \begin{split}
\mathcal{L}v(t,x,i)-r(i)v(t,x,i) &=0, \;\quad \qquad  \mbox{in $[0,T)\times \mathbb{R}\times \mathcal{M}$,} \\
v(T,x,i) &=\varphi(x,i),  \quad \mbox{in $\mathbb{R}\times \mathcal{M}$,}
 \end{split}
 \end{equation}
where $\mathcal{M}:=\{1,2,\ldots,m\}$, $r(i)\geq 0$, $\varphi(\cdot,i)$ is continuous in $\mathbb{R}$ and of polynomial growth, %for each $i\in \mathcal{M}$, $|\varphi(x,i)|\leq C(1+|x|^p)$ for some $C,p>0$,
and for any $f(\cdot,\cdot,i)\in C^{1,2}([0,T]\times \mathbb{R})$ the operator $\mathcal{L}$ is defined by
\[
\mathcal{L}f(t,x,i)=\frac{\partial f(t,x,i)}{\partial t}+b(x,i)\frac{\partial f(t,x,i)}{\partial x}+\frac{1}{2}\sigma^2(x,i)\frac{\partial^2 f(t,x,i)}{\partial x^2}+Qf(t,x,\cdot)(i).
\]
Here, $Q=[q_{ij}]_{m\times m}$ represents the generator of a continuous-time Markov chain in the finite state space $\mathcal{M}$ with transition rates $q_{ij}\geq 0$ for $i\neq j$, and
\[
Qf(t,x,\cdot)(i)=\sum_{j\neq i} q_{ij}[f(t,x,j)-f(t,x,i)],
\qquad
\sum_{j\neq i} q_{i j}=-q_{ii}=: q_i.
\]
The solution to \eqref{eq:Cauchy_p} is known to have a stochastic representation, under suitable conditions on $b$ and $\sigma$ recalled below, via the Feynman-Kac formula (see Theorem 6 in \cite{Baranetal}):
\begin{equation} \label{eq:cond_exp_general}
v(t,x,i)=\mathbb{E}_{t,x,i}[e^{-\int_t^T r(\alpha_u)du}\varphi(X_T,\alpha_T)], \qquad (t,x,i)\in [0,T)\times \mathbb{R}\times \mathcal{M},
\end{equation}
where we write $\mathbb{E}_{t,x,i}[\cdot]=\mathbb{E}[ \,\cdot  \mid X_t=x,\alpha_t=i]$ for short,
and $(X_t,\alpha_t)$ is
is a regime-switching diffusion (see e.g. \cite{Yin&Zhu2010}) defined on a filtered probability space $(\Omega,\mathcal{F}, \{\mathcal{F}_t\}_{t\geq 0}, \mathbb{P})$.

Conditional expectations of the form in \eqref{eq:cond_exp_general} arise in many applications, particularly in the pricing of financial contracts (see e.g. \cite{Musiela&Rutkowski} and references therein).
This expectation can be approximated by numerically solving the coupled PDE in \eqref{eq:Cauchy_p} by finite-difference methods (\cite{Chen&Insley}, \cite{Duffy}, \cite{Li2016}),
by trinomial tree methods (\cite{Ma&Zhu}), or by Monte Carlo simulation (\cite{Glasserman}).
In this paper, we propose an analytical representation of $v$ in \eqref{eq:cond_exp_general} as the fixed point of an integral equation where we exploit the contraction theorem on Banach spaces.
We give a sufficient condition for this representation to hold and provide an
%Our motivation in studying this representation is in providing an alternative representation of derivatives prices.
example in the context of commodity derivatives.% based on options on commodity futures
%The associated infinitesimal generator,

%We borrow the following result from \cite{Jacka&Ocejo}.
%\begin{lemma}
%There is a continuous function $N(x)$ such that
%\[
%|v(t,x,i)|\leq C+N(x)
%\]
%\end{lemma}
%The function $N(x)$ is explicit and depends on the fixed parameters $K,T,p$ (see Proposition A.1 in \cite{Jacka&Ocejo}).

\section{Main result}

The regime-switching process $(X,\alpha)=\{(X_t,\alpha_t)\}_{t\geq 0}$ is described by
\[
dX_t=b(X_t,\alpha_t)dt+\sigma(X_t,\alpha_t)dW_t,
\]
and
\[
\mathbb{P}( \alpha(t+\Delta)=j \mid \alpha(t)=i, X_s, \alpha(s), s\leq t)=q_{ij} \Delta +o(\Delta), \qquad i\neq j,
\]
where $W=(W_t)_{t\geq 0}$ is a standard Brownian motion independent of the continuous-time Markov chain $\alpha$,
and
the functions $b(\cdot,\cdot):\mathbb{R}\times \mathcal{M}\mapsto \mathbb{R}$ and $\sigma(\cdot,\cdot):\mathbb{R}\times \mathcal{M}\mapsto \mathbb{R}$
satisfy the linear growth and local Lipschitz conditions, respectively:
\[
|b(x,i)|+|\sigma(x,i)|  \leq K(1+|x|), \qquad i\in \mathcal{M},
\]
for some constant $K>0$, and for each $N\geq 1$, there exists a positive constant $M_N$ such that for all $i\in \mathcal{M}$, and $x,y\in \mathbb{R}$ with $|x|\vee|y|\leq M_N$,
\[
|b(x,i)-b(y,i)|\vee |\sigma(x,i)-\sigma(y,i)|\leq M_N|x-y|.
\]

For each $i\in \mathcal{M}$, we shall denote by $X^{(i)}_u$, $u\geq t$, the solution to
the non-regime switching stochastic differential equation $dX_u=b(X_u,i)du+\sigma(X_u,i)dW_u$ and $X_t=x$.
Consider the Fokker-Planck equation associated with the transition density $f_i(u,y;t,x)$ of the random variable $X^{(i)}_u$, $u\geq t$:
\begin{equation}\label{eq:F-P}
\frac{\partial}{\partial u}f_i(u,y;t,x)=-\frac{\partial}{\partial y}[b(y,i) f_i(u,y;t,x)]+\frac{1}{2}\frac{\partial^2}{\partial y^2}[\sigma^2(y,i)f_i(u,y;t,x)].
\end{equation}
%Our method relies on the contraction theorem, and we work on

\pagebreak
Let $\mathcal{S}$ denote the Banach space of all bounded measurable functions $h:\mathcal{E}\mapsto \mathbb{R}$, where $\mathcal{E}:=[0,T]\times \mathbb{R}\times \mathcal{M}$,
with the supremum norm
\[
||h||:=\sup_{(t,x,i)\in \mathcal{E}}|h(t,x,i)|.
\]

\begin{assumption} \label{assump:boundedness}\normalfont
There exist a continuous function $D:[0,T]\times \mathbb{R}\mapsto \mathbb{R}\backslash\{0\}$
such that for each $i\in \mathcal{M}$, the process $\{e^{-r(i)t}D(t,X_t^{(i)})\}_{0\leq t\leq T}$ is a supermartingale and
the function $H(t,x,i):=v(t,x,i)/D(t,x)$ belongs to $\mathcal{S}$.
%This implies that for each $(t,x,i)\in \mathcal{E}$, $\mathbb{E}_{t,x,i}[D(u,X_u)]\leq e^{r(i)(u-t)}$.
\end{assumption}

The function $D$ may be understood as a type of \textit{dampening factor} of the value function $v$ that forces it to be bounded.
\textit{Dampening} (or discounting) $v(t,x,i)$ by $D(t,x)$ allows obtaining a contractive condition for the application of the fixed point theorem.
Note that if $\varphi$ is bounded then Assumption \ref{assump:boundedness} automatically holds by choosing $D(t,x)\equiv 1$.
We give a non-trivial example in the next section when $\varphi$ is not bounded.

\begin{theorem} \label{thm:main}
Let Assumption \ref{assump:boundedness} hold. Then $H(t,x,i):=v(t,x,i)/D(t,x)$ is the unique solution of the integral equation
\begin{equation} \label{eq:inteq}
h(t,x,i)=H_0(t,x,i)+\mathcal{T}(h)(t,x,i)
\end{equation}
where
\begin{equation} \label{eq:inteq_initial}
%H_0(t,x,i)\equiv D^{-1}(t,x)\,v_0(t,x,i)e^{-q_i(T-t)}
H_0(t,x,i)\equiv e^{-q_i(T-t)} \,\mathbb{E}_{t,x,i}\left[e^{- r(i)(T-t)}\varphi(X^{(i)}_T,i) \right]/D(t,x)
\end{equation}
%Also for each $i \in \mathcal{M}$, define value function $v$ conditional on the Markov chain having no jump during the remaining time until $T$ as
%\[
%v_0(t,x,i) :=\mathbb{E}_{t,x,i}[e^{-\int_t^T r(\alpha_u)du}\varphi(X_T,\alpha_T) \mid \alpha(s)=i, \,t\leq s\leq T]
%\]
and the operator $\mathcal{T}:\mathcal{S}\mapsto \mathcal{S}$ is a contraction on $\mathcal{S}$ defined as
\[
\mathcal{T}(h)(t,x,i)=D^{-1}(t,x)\sum_{j\neq i} q_{ij}\int_t^T \int_\mathbb{R} \, e^{-(q_i+r(i))(u-t)} D(u,y) h(u,y,j)\, f_i(u, y;t,x)dy du.
\]
\end{theorem}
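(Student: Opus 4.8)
The strategy is to derive the integral equation by conditioning on the first jump time of the Markov chain $\alpha$, then recast the resulting identity in terms of $H=v/D$, and finally verify that $\mathcal{T}$ is a contraction on $\mathcal{S}$ so that the Banach fixed point theorem applies.

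\textbf{Step 1: Decompose the Feynman–Kac expectation via the first jump of $\alpha$.} Starting from $(t,x,i)$, let $\tau$ be the first time $\alpha$ leaves state $i$; since $\alpha$ is a continuous-time Markov chain with holding-rate $q_i$ in state $i$, $\tau-t$ is $\mathrm{Exp}(q_i)$ and independent of the Brownian motion $W$. On $\{\tau > T\}$ the chain stays in $i$ on $[t,T]$, so $X$ coincides with $X^{(i)}$ and the discount factor is $e^{-r(i)(T-t)}$; this contributes exactly the term $e^{-q_i(T-t)}\,\mathbb{E}_{t,x,i}[e^{-r(i)(T-t)}\varphi(X^{(i)}_T,i)]$, i.e. $D(t,x)H_0(t,x,i)$. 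On $\{\tau \le T\}$, conditioning on $\tau = u \in (t,T]$ and on the post-jump state being $j$ (which happens with probability $q_{ij}/q_i$), the strong Markov property of $(X,\alpha)$ gives that the remaining expectation equals $v(u, X^{(i)}_u, j)$; integrating against the exponential density $q_i e^{-q_i(u-t)}\,du$, the factor $q_i$ cancels with the $q_{ij}/q_i$ to leave $\sum_{j\neq i} q_{ij}$, against the discount $e^{-r(i)(u-t)}$ accrued before the jump and against the law $f_i(u,y;t,x)\,dy$ of $X^{(i)}_u$. This yields
\begin{equation*}
v(t,x,i)=D(t,x)H_0(t,x,i)+\sum_{j\neq i} q_{ij}\int_t^T\!\!\int_{\mathbb{R}} e^{-(q_i+r(i))(u-t)} v(u,y,j)\, f_i(u,y;t,x)\,dy\,du.
\end{equation*}

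\textbf{Step 2: Pass to $H$.} Dividing through by $D(t,x)$ and writing $v(u,y,j)=D(u,y)H(u,y,j)$ inside the integral turns the displayed identity into $H=H_0+\mathcal{T}(H)$, which is exactly \eqref{eq:inteq}. One should also note that $H_0\in\mathcal{S}$: this is part of what Assumption \ref{assump:boundedness} is arranged to give, since $H_0(t,x,i)=e^{-q_i(T-t)}\,\mathbb{E}_{t,x,i}[D(T,X^{(i)}_T)H(T,X^{(i)}_T,i)]e^{-r(i)(T-t)}/D(t,x)$ and $H\in\mathcal{S}$ together with the supermartingale property of $e^{-r(i)t}D(t,X^{(i)}_t)$ bounds it by $\|H\|$.

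\textbf{Step 3: Contraction estimate.} For $h_1,h_2\in\mathcal{S}$, bound
\begin{equation*}
|\mathcal{T}(h_1)(t,x,i)-\mathcal{T}(h_2)(t,x,i)| \le \|h_1-h_2\|\, D^{-1}(t,x)\sum_{j\neq i} q_{ij}\int_t^T\!\!\int_{\mathbb{R}} e^{-(q_i+r(i))(u-t)} D(u,y)\, f_i(u,y;t,x)\,dy\,du.
\end{equation*}
Using $r(i)\ge 0$ to drop the $e^{-r(i)(u-t)}$ factor and the supermartingale property $\mathbb{E}_{t,x,i}[e^{-r(i)(u-t)}D(u,X^{(i)}_u)]\le D(t,x)$ — hence a fortiori $\int_{\mathbb{R}} D(u,y) f_i(u,y;t,x)\,dy \le D(t,x)$ — the inner integral is at most $D(t,x)$, so the right side is bounded by $\|h_1-h_2\|\sum_{j\neq i} q_{ij}\int_t^T e^{-q_i(u-t)}\,du = \|h_1-h_2\|\, q_i \int_t^T e^{-q_i(u-t)}\,du = \|h_1-h_2\|\,(1-e^{-q_i(T-t)})$. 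Taking the supremum over $(t,x,i)$ and setting $\theta:=\max_{i\in\mathcal{M}}(1-e^{-q_iT}) < 1$ gives $\|\mathcal{T}(h_1)-\mathcal{T}(h_2)\| \le \theta\,\|h_1-h_2\|$, so $\mathcal{T}$ is a contraction; one checks similarly (taking $h_2\equiv 0$) that $\mathcal{T}$ maps $\mathcal{S}$ into $\mathcal{S}$.

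\textbf{Step 4: Conclude.} Since $\mathcal{S}$ is a Banach space and $h\mapsto H_0+\mathcal{T}(h)$ is a contraction on it, the Banach fixed point theorem gives a unique fixed point in $\mathcal{S}$; by Steps 1–2 the function $H=v/D$ is such a fixed point, hence it is the unique solution of \eqref{eq:inteq}.

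\textbf{Main obstacle.} The routine part is the contraction estimate; the delicate point is Step 1 — justifying the first-jump decomposition rigorously, i.e. applying the strong Markov property of the joint process $(X,\alpha)$ at the stopping time $\tau$ and checking that the conditional law of $(X_u,\alpha_u)$ given $\{\alpha \text{ constant on } [t,u]\}$ is indeed $f_i(u,\cdot;t,x)\,dy \otimes \delta_i$ — together with the integrability needed to interchange expectation and the $du$-integral (Fubini/Tonelli), which is where the polynomial growth of $\varphi$ and the linear-growth bound on $b,\sigma$ (giving finite moments of $X^{(i)}$) enter.
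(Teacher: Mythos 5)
Your proof follows essentially the same route as the paper's: condition on the first jump time of $\alpha$ to split $v$ into the no-jump contribution $D(t,x)H_0(t,x,i)$ plus the integral term, divide by $D(t,x)$, and then use the supermartingale property of $e^{-r(i)t}D(t,X^{(i)}_t)$ to get $\|\mathcal{T}(h_1)-\mathcal{T}(h_2)\|\le \rho\,\|h_1-h_2\|$ with $\rho<1$ and conclude by the Banach fixed point theorem. One small correction to Step 3: from $\mathbb{E}_{t,x,i}[e^{-r(i)(u-t)}D(u,X^{(i)}_u)]\le D(t,x)$ it does \emph{not} follow ``a fortiori'' that $\int_{\mathbb{R}}D(u,y)f_i(u,y;t,x)\,dy\le D(t,x)$ --- the implied multiplication by $e^{r(i)(u-t)}\ge 1$ goes the wrong way --- but this parenthetical is unnecessary, since keeping the factor $e^{-r(i)(u-t)}$ inside the inner integral and applying the supermartingale bound directly already gives your stated estimate $e^{-q_i(u-t)}D(t,x)$ and hence the contraction constant $\max_i\bigl(1-e^{-q_iT}\bigr)<1$.
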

\begin{proof}
Fix $t\in [0,T)$ and consider the first jump time of the Markov chain $\alpha$ after time $t$, $\tau(t):=\inf\{u\geq t: \alpha_u\neq \alpha_t\}$.
Suppose that $X_t=x$, $\alpha_t=i$.

We first show that $H$ is indeed a solution to \eqref{eq:inteq}.
Using that $\tau(t)\sim Exp(q_i)$, by the law of total expectation and properties of the conditional expectation with respect to an event, we can split $v$ as follows:
\begin{equation} \label{eq:firstsplit}
\begin{aligned}
v(t,x,i)   &=\mathbb{E}_{t,x,i}\left[\, e^{-\int_t^T r(\alpha_u) du}\varphi(X_T,\alpha_T)\mid \tau(t)> T\,\right]\mathbb{P}(\tau(t)>T) \\ %law of total expectation
            & \qquad +\mathbb{E}_{t,x,i}\left[\, e^{-\int_t^T r(\alpha_u) du}\varphi(X_T,\alpha_T)\,\mathbbm{1}(\tau(t)\leq T)\,\right]\\ %conditional expectation with respect to an event
             &=e^{-q_i(T-t)}\mathbb{E}_{t,x,i}\left[e^{- r(i)(T-t)}\varphi(X^{(i)}_T,i) \right] \\
             & \qquad +\mathbb{E}_{t,x,i}\left[\, e^{-\int_t^T r(\alpha_u) du}\varphi(X_T,\alpha_T)\,\mathbbm{1}(\tau(t)\leq T)\,\right].
\end{aligned}
\end{equation}
We can further write the second expectation on the right hand side of \eqref{eq:firstsplit} as
\[
\begin{aligned}
\mathbb{E}_{t,x,i} &\left[\, e^{-\int_t^T r(\alpha_u) du}\varphi(X_T,\alpha_T)\,\mathbbm{1}(\tau(t)\leq T)\,\right]  \\
& \hspace{-1cm} = \int_t^T q_i \,e^{-q_i(u-t)} \mathbb{E}_{t,x,i} \left[\, e^{-\int_t^T r(\alpha_s) ds}\varphi(X_T,\alpha_T)\,\mid \tau(t)=u\right]  du \\
%& \hspace{-1cm} = \int_s^T q_i e^{-q_i(t-s)} \mathbb{E}_{s,x,a,i}
%        \left[\, e^{-r(i)(t-s)}  \mathbb{E}_{s,x,a,i} \left[\, e^{-\int_t^T r(Y_u) du}\left(X_T-\frac{A_T}{T-t_0}\right)^+\,\mid X_t,A_t,Y_t\right]\mid \tau=t\right]  dt \\
& \hspace{-1cm} = \int_t^T q_i \,e^{-(q_i+r(i))(u-t)} \mathbb{E}_{t,x,i}
\left[\, \mathbb{E}\left[\, e^{-\int_u^T r(\alpha_s) ds}\varphi(X_T,\alpha_T) \mid \mathcal{F}_u \right] \mid \tau(t)=u \right] du \\
& \hspace{-1cm} = \int_t^T q_i \,e^{-(q_i+r(i))(u-t)} \mathbb{E}_{t,x,i} \left[\, v(u,X_u,\alpha_u)\mid \tau(t)=u\right]du \\
& \hspace{-1cm} = \sum_{j\neq i} q_{ij}\int_t^T  \int_\mathbb{R} \,e^{-(q_i+r(i))(u-t)} \, D(u,y)\,H(u,y,j)f_i(u,y; t,x)\,dy\,du,
%& \hspace{-1cm} = \sum_{j\neq i} q_{ij}\int_t^T  \int_\mathbb{R} \,e^{-(q_i+r(i))(u-t)} \, v(u,y,j)f_i(u,y; t,x)\,dy\,du,
\end{aligned}
\]
where we used the Markov property of the two-component process $(X,\alpha)$.
Multiply both sides of \eqref{eq:firstsplit} by $D^{-1}(t,x)$ to obtain that $H$ indeed solves \eqref{eq:inteq}.

Now, let us show that the operator $\mathcal{T}$ is a contraction mapping on $\mathcal{S}$.
By Assumption \ref{assump:boundedness}, it follows that
\[
\begin{aligned}
D^{-1}(t,x)\sum_{j\neq i}q_{ij} &\int_t^T e^{-(q_i+r(i))(u-t)}\left[\int_\mathbb{R} D(u,y)f_i(u,y;t,x) \, dy\right]du \\
& = D^{-1}(t,x)\sum_{j\neq i}q_{ij} \int_t^T e^{-(q_i+r(i))(u-t)}\, \mathbb{E}_{t,x,i}[D(u,X^{(i)}_u)]du \\
& \leq  \sum_{j\neq i}q_{ij} \int_t^T e^{-(q_i+r(i))(u-t)}\, e^{r(i)(u-t)}du \\
& = \sum_{j\neq i}q_{ij}\int_t^T e^{-q_i(u-t)}du = (1-e^{-q_i(T-t)})<1.
\end{aligned}
\]
Set $\rho=\max_{i\in \mathcal{M}} \{1-e^{-q_i(T-t)}\}$.
Upon applying the supremum norm $||\mathcal{T}(h)|| \leq   \rho ||h||$ it is implied that $\mathcal{T}$ is a contraction.
By the fixed point theorem on Banach spaces the equation \eqref{eq:inteq} has a fixed point, which implies that $H$ is the only solution.
\end{proof}

The significance of this result is that an iterative procedure can be furnished by means of the sequence of successive approximations (Picard iteration) %see book Iterative Approximation of Fixed Points
\[
H_{n+1}= \mathcal{T}(H_n)+H_0, \qquad n=0,1,2, \ldots
\]
By the contraction principle, the sequence $\{H_n\}_{n\geq 0}$ converges to the unique solution $H$ of \eqref{eq:inteq}, with initial point $H_0$ as in \eqref{eq:inteq_initial} and the error estimate is
\[
|| H_n - H|| \leq \frac{\rho^n}{1-\rho} || H_1-H_0||
\]
for every $n>1$. % see Section 5.1 of black book Introductory Functional Analysis with Applications.
In future work, we seek to implement the Picard iteration to approximate the solution and compare its performance and accuracy with existing PDE and Monte Carlo methods.

\section{Example with unbounded $\varphi$ and regime-switching Ornstein-Uhlenbeck model} \label{sec:eg}

In this section we provide an example in the context of derivatives pricing where the underlying spot price is $S_t=e^{X_t}$, and
the log-price process $X=(X_t)_{t\geq 0}$ follows the regime-switching Ornstein-Uhlenbeck dynamics
\begin{equation} \label{eq:OU}
dX_t=\beta( \theta(\alpha_t)-X_t)\,dt+\sigma(\alpha_t)dW_t.
\end{equation}
Both the long-run mean log price $\theta$, and the volatility $\sigma$, depend on the regime and are related by the expression
\begin{equation} \label{eq:tech_assump}
\theta(i)=r(i)-\frac{\sigma^2(i)}{2\beta},
\end{equation}
where we also allow the interest rate $r(i)>0$ to depend on the regime.
Evidence from futures prices in commodities such as oil, metals, agricultural products, and electricity
show that prices revert towards a mean reversion level (see e.g. \cite{Bessem_etal}).
The model in \eqref{eq:OU}-\eqref{eq:tech_assump} is an extension of the one-factor model described in Schwartz \cite{Schwartz} for commodity prices, which assumes constant parameters.
Here, the regimes represent the business cycle.
Empirical studies showing the effect of the economic regime in commodity spot prices can be seen in %who provided evidence that the spot price of metal prices is affected by the economic regime.
\cite[Ch. 22]{Commodities}) and \cite{Fama&French}.

In this context, we assume that the speed of mean reversion $\beta$ satisfies the condition $0<\beta\leq 1$,
which is supported by Schwartz's empirical studies \cite{Schwartz}.
For our example, consider a commodity call option with strike $K>0$ and maturity $T>0$ given by
\begin{equation} \label{eq:CommodityOption}
%C(t,x,i)=\mathbb{E}_{t,x,i}\left[ e^{-\int_{t}^T r(\alpha_u)du} (e^{X_T}-K)^+\right],
v(t,x,i)=\mathbb{E}_{t,x,i}\left[ e^{-\int_{t}^T r(\alpha_u)du}\varphi(X_T)\right],
\end{equation}
where $\varphi(x)= \max(e^{x}-K,0)$ is unbounded. It is well-known that the solution to \eqref{eq:F-P}, the density of the Ornstein-Uhlenbeck process, is
\[
f_i(u,y; t,x)=\frac{1}{\sqrt{2\pi \nu^2(u-t,i)}}\exp\left\{-\frac{[y- xe^{-\beta(u-t)}-m(u-t,i)]^2}{2\nu^2(u-t,i)} \right\}
\]
where
\begin{equation*}
m(s,i) :=\theta(i)(1-e^{-\beta s}), \qquad
\nu^2(s,i):=\frac{\sigma^2(i)}{2\beta}(1-e^{-2\beta s}).
\end{equation*}

We now define the dampening function and show that it satisfies the conditions in Assumption \ref{assump:boundedness}.
Consider the continuous function $D:[0,T]\times \mathbb{R}\mapsto \mathbb{R}\backslash\{0\}$ defined as
\begin{equation}\label{eq:D}
D(t,x):=\exp\{xe^{-\beta(T-t)}\}.
\end{equation}

Let us first show that for each $i\in \mathcal{M}$, the process $\{e^{-r(i)t}D(t,X_t^{(i)})\}_{0\leq t\leq T}$ is a supermartingale.
For $0\leq t\leq u\leq T$,
\[
\begin{split}
\mathbb{E}[D(u,X_u^{(i)})\mid \mathcal{F}_t] & = \mathbb{E}[\exp\{X_u^{(i)} e^{-\beta(T-u)}\}\mid X^{(i)}_t]  \\
            & = \exp\left\{ e^{-\beta(T-u)}\left(X_t^{(i)}e^{-\beta(u-t)}+m(u-t,i)\right)+\frac{1}{2}e^{-2\beta(T-u)}\nu^2(u-t,i)\right\} \\
            & \leq  D(t,X^{(i)}_t)\exp\left\{ e^{-\beta(T-u)}|m(u-t,i)|+\frac{1}{2}e^{-2\beta(T-u)}\nu^2(u-t,i)\right\}.
\end{split}
\]
By the definition of $m$ and $\nu$ along with the assumption in \eqref{eq:tech_assump} and $0<\beta\leq 1$,
\[
e^{-\beta(T-u)}|m(u-t,i)|+\frac{e^{-2\beta(T-u)}}{2}v^2(u-t,i)
\leq\left( \beta \theta(i)+\frac{1}{2}\sigma^2(i)\right)(u-t)
\leq r(i)(u-t).
\]
Therefore, $\mathbb{E}[D(u,X_u^{(i)})\mid \mathcal{F}_t]\leq D(t,X_t^{(i)})e^{r(i)(u-t)}$, as required.

Next, observe that $v(t,x,i) \leq \mathbb{E}_{t,x,i}[e^{X_T}]$ and so
\[
\begin{split}
\frac{v(t,x,i)}{D(t,x)} & \leq \mathbb{E}_{t,x,i}\left[\,\exp\left\{\int_t^T \theta(\alpha_u)e^{-\beta(T-u)}du+\frac{1}{2}\int_t^T \sigma^2(\alpha_u)e^{-2\beta(T-u)}du \right\}  \right]\\
                &\leq  \,\exp\left\{\sum_{i\in\mathcal{M}}\left(\int_t^T |\theta(i)|e^{-\beta(T-u)}du+\frac{1}{2}\int_t^T \sigma^2(i)e^{-2\beta(T-u)}du\right) \right\}  \\
                &\leq \prod_{i\in \mathcal{M}}e^{|m(T,i)|+\frac{1}{2}\nu^2(T,i)},
\end{split}
\]
where we used that $|m(\cdot,i)|$ and $\nu^2(\cdot,i)$ are increasing functions.
Thus, the \textit{dampened}  commodity call option $v(t,x,i)/D(t,x) \in \mathcal{S}$, and by
Theorem \ref{thm:main} it satisfies the integral equation \eqref{eq:inteq} where
$v_0(t,x,i)=\mathbb{E}_{t,x,i}[e^{-r(i)(T-t)}\varphi(X^{(i)}_T) ]$ is given by
\[
v_0(t,x,i)=e^{-r(i)(T-t)}\left[\,\exp\left\{xe^{-\beta(T-t)}+m(T-t,i)+\frac{1}{2}\nu^2(T-t,i) \right\} \Phi(d_1)-K\,\Phi(d_2)\right],
\]
with
\[
\begin{split}
d_1=d_1(t,x,i) &=\frac{xe^{-\beta(T-t)}-\ln(K)+m(T-t,i)+\nu^2(T-t,i)}{\nu(T-t,i)}, \\
d_2=d_2(t,x,i) & =d_1(t,x,i)-\nu(T-t,i),
\end{split}
\]
and $\Phi(\cdot)$ denotes the standard normal distribution.

%\bibliographystyle{plain}
%\bibliography{MyRefs}

\end{document}